\newtheorem{theorem}{Theorem}[section]
\newtheorem{proposition}[theorem]{Proposition}
\newtheorem{lemma}[theorem]{Lemma}
\begin{document}

\title{The Tur\'an number of sparse spanning graphs}

\author{
Noga Alon
\thanks{Sackler School of Mathematics
and Blavatnik School of Computer Science,
Tel Aviv University,
Tel Aviv 69978, Israel. E-mail: nogaa@tau.ac.il.
Research supported in part by an ERC advanced grant, by
a USA-Israeli BSF grant, and by the Israeli I-Core program.
} \and
Raphael Yuster
\thanks{Department of Mathematics, University of Haifa, Haifa
31905, Israel. E--mail: raphy@research.haifa.ac.il}
}

\date{}

\maketitle

\setcounter{page}{1}

\begin{abstract}
For a graph $H$, the {\em extremal number} $ex(n,H)$ is the maximum number
of edges in a graph of order $n$ not containing a subgraph isomorphic
to $H$.  Let $\delta(H)>0$ and $\Delta(H)$ denote the minimum degree
and maximum degree of $H$, respectively.  We prove that for all $n$
sufficiently large, if $H$ is any graph of order $n$ with $\Delta(H) \le
\sqrt{n}/200$, then $ex(n,H)={{n-1} \choose 2}+\delta(H)-1$.  The condition
on the maximum degree is tight up to a constant factor.  This generalizes
a classical result of Ore for the case $H=C_n$, and resolves, in a strong
form, a conjecture of Glebov, Person, and Weps for the case of graphs.
A counter-example to their more general conjecture concerning the extremal
number of bounded degree spanning hypergraphs is also given.
\end{abstract}

\section{Introduction}

Ore \cite{ore-1961} proved that a non-Hamiltonian graph of order $n$ has at most ${{n-1} \choose 2}+1$ edges.
Ore's theorem can be expressed as a spanning Tur\'an-type result.
For a graph $H$, the {\em extremal number} $ex(n,H)$ is the maximum number of edges in a graph of order $n$ not containing a subgraph isomorphic to $H$.
Hence, Ore's theorem is that $ex(n,C_n)={{n-1} \choose 2}+1$, where $C_n$ is the cycle of order $n$.

Recently, Ore's theorem has been generalized to the setting of Hamilton cycles in hypergraphs.
For integers $n > k > \ell \ge 0$ where $(k-\ell) | n$, the $(k,\ell)$-tight cycle of order $n$, denoted by $C_n^{(k,l)}$, is the $k$-uniform hypergraph on vertex set $[n]$ and
edges $\{i(k-\ell) + 1, i(k-\ell) + 2, \ldots , i(k-\ell) + k\}$ for $0 \le i < \frac{n}{k-\ell}$ (addition modulo $n$).
In particular, $C_n = C_n^{(2,1)}$.
Extending and generalizing earlier results of Katona and Kierstead \cite{KK-1999} and of Tuza \cite{tuza-2006},
the extremal number $ex(n,C_n^{(k,l)})$ was determined by Glebov, Person and Weps in \cite{GPW-2012} for all $k$ and $\ell$, assuming $n$ is sufficiently large
and $(k-\ell) | n$. Their result extends Ore's theorem to the hypergraph setting.
The extremal number $ex(n,C_n^{(k,l)})$ is of the form ${{n-1} \choose k} + ex(n-1,P)$ where
$P$ is the $(k-1,\ell-1)$-tight path (defined in the obvious way)
with $\lfloor k/(k-\ell) \rfloor(k-\ell)+\ell-1$ vertices.

It is natural to try to extend Ore's result to spanning structures other than just Hamilton cycles, in both the graph and hypergraph settings.
Suppose that $H$ is a $k$-uniform hypergraph of order $n$ and with, say, bounded maximum degree. 
It is natural to suspect that $ex(n,H) \le {{n-1} \choose k} + ex(n-1,S)$ where $S$ is some set of $(k-1)$-uniform hypergraphs that depend on the neighborhood
structure of $H$. A conjecture raised in \cite{GPW-2012} asserts that it suffices to take $S$ to be the set of {\em links} of $H$.
The link of a vertex $v$ in a $k$-uniform hypergraph $H=(V,E)$
is the $(k-1)$-uniform hypergraph
$H(v) = (V\setminus\{v\},E_v)$ with $\{x_1, \ldots, x_{k-1}\}\in E_v$ iff $\{v,x_1,\ldots, x_{k-1}\} \in E$.
For example, the links of a vertex of $C_n^{(3,1)}$ are either the graph with a single edge or the graph with two independent edges.
The link of a vertex of $C_n^{(3,2)}$ is the graph $P_4$ (the path on four vertices).
Conjecture 9 in \cite{GPW-2012} states that 
$ex(n,H) \le {{n-1} \choose k} + ex(n-1,{\cal L})$ 
where ${\cal L}$ is the set of links of $H$ and $ex(n-1,{\cal L})$
denotes the maximum number of edges in a $(k-1)$-uniform
hypergraph on $n$ vertices that contains none of the links 
of $H$.
Observe that this conjecture holds for both Ore's Theorem and its aforementioned generalization to Hamilton cycles in hypergraphs.
In fact, it holds with equality in these cases.

In the graph-theoretic case, the link of a vertex is just a set of singletons whose cardinality is the degree of the vertex.
In this case, the aforementioned conjecture states that if $H$ is a 
graph of order $n$ with minimum degree $\delta>0$, 
and bounded maximum degree, then
$ex(n,H)={{n-1} \choose 2}+\delta-1$ assuming $n$ is sufficiently large.
(Clearly, we cannot expect to have a sharp inequality, as an $(n-1)$-clique together with an additional vertex that
is connected only to $\delta-1$ vertices of the clique does not contain $H$ as a spanning subgraph.)
Indeed, our main result is a proof of this conjecture in a strong sense. We do not require the maximum degree of $H$ to be bounded
independently of $n$.
\begin{theorem}
\label{t:main}
For all $n$ sufficiently large, if $H$ is any graph of order $n$ 
with no isolated vertices and 
$\Delta(H) \le \sqrt{n}/200$, then $ex(n,H)={{n-1} \choose 2}+\delta(H)-1$.
\end{theorem}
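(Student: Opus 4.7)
For the lower bound, the graph $G_0=K_{n-1}\cup\{v\}$ with $v$ joined to exactly $\delta(H)-1$ vertices of the clique has $\binom{n-1}{2}+\delta(H)-1$ edges and contains no spanning copy of $H$, since $v$ has $G_0$-degree $\delta(H)-1<\delta(H)$ and so cannot play the role of any vertex of $H$.

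For the upper bound, suppose $G$ is an $n$-vertex graph with $e(G)\ge\binom{n-1}{2}+\delta$, where $\delta=\delta(H)$, so that its complement satisfies $e(\bar G)\le n-\delta-1$ and is extremely sparse. Producing $H$ as a spanning subgraph of $G$ is equivalent to constructing a bijection $\phi:V(H)\to V(G)$ that sends no edge of $H$ into $E(\bar G)$. My plan is a hybrid deterministic/probabilistic construction: first handle the few ``heavy'' vertices of $\bar G$ explicitly, then extend by a uniformly random bijection, and finally repair the few remaining defects by local swaps.

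Define the set of heavy vertices $W=\{v\in V(G):d_{\bar G}(v)\ge T\}$ for a threshold $T=\Theta(\sqrt n)$. Since $\sum_v d_{\bar G}(v)\le 2(n-\delta-1)$, we get $|W|=O(\sqrt n)$, and each vertex of $V(G)\setminus W$ is almost universal in $G$. On the $H$-side, $\Delta(H)\le\sqrt n/200$ forces $\alpha(H)\ge n/(\Delta(H)+1)\gg|W|$, so we can choose an independent set $I\subseteq V(H)$ of size $|W|$ whose elements have small $H$-degree and inject $I$ into $W$ subject to a compatibility check on neighborhoods. Next, extend to a full bijection by placing $V(H)\setminus I$ uniformly at random onto $V(G)\setminus W$. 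A direct computation gives
\[
E[\text{\# bad edges}]\ \le\ |E(H)|\cdot\frac{2\,e(\bar G)}{n(n-1)}\ \le\ \frac{n\Delta(H)}{2}\cdot\frac{2(n-1)}{n(n-1)}\ \le\ \frac{\sqrt n}{200},
\]
and a Talagrand-type concentration inequality for random permutations shows that the actual number of bad edges is $O(\sqrt n)$ with high probability. Finally, for each bad edge $uu'$, swap $\phi(u)$ with some $\phi(w)$ chosen so that the swap eliminates $uu'$ without creating new bad edges; a short counting argument bounds the forbidden choices of $w$ by $O(\Delta(H)\cdot T)\ll n$, so valid swap partners always exist, and the $O(\sqrt n)$ bad edges can be fixed by pairwise-disjoint swaps.

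The principal technical challenge is the repair step: every swap must be compatible with every other swap, and the combined action must not undo the careful handling of $W$ from the first stage. Keeping this globally consistent demands careful parameter balancing among $|W|$, $T$, $\Delta(H)$, and the expected number of bad edges, and it is precisely this balance that consumes the constant in the hypothesis $\Delta(H)\le\sqrt n/200$: an asymptotically weaker constraint would either inflate the bad-edge count past the swap-repair threshold or inflate $|W|$ past the independence-number bound, in line with the authors' remark that the maximum-degree bound is tight up to a constant factor.
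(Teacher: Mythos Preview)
Your repair step has a genuine gap, precisely at the bad edges incident to the heavy set $W$. You claim that for a bad edge $uu'$ the forbidden swap partners number $O(\Delta(H)\cdot T)\ll n$, but this bound silently assumes every vertex whose $\bar G$-neighbourhood enters the count has $\bar G$-degree at most $T$. If $u\in I$ then $\phi(u)\in W$, and a swap that repairs $uu'$ must move $\phi(u')$ to a $G$-neighbour of $\phi(u)$; the forbidden set therefore has size $d_{\bar G}(\phi(u))\ge T$, and in the extreme case ($\bar G$ a star centred at $v_1$) it has size $n-\delta-1$, leaving only $\delta$ valid targets. Moreover the vertex $u$ has at least $\delta$ $H$-neighbours, each producing a bad edge with the same constraint, so you need a system of $\delta$ swaps sharing those same $\delta$ slots; a couple of extra $\bar G$-edges among those slots, or $H$-edges among the $u_i$, can make the system infeasible, and in any case your ``$O(\Delta(H)T)$ forbidden choices'' count is simply false here. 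The phrase ``subject to a compatibility check on neighbourhoods'' is exactly where the work lies, and you have not done it: placing $I$ on $W$ without simultaneously controlling where the $H$-neighbours of $I$ land is what creates these unrepairable defects.

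For comparison, the paper does not use random-bijection-plus-swap at all. It orders $V(\bar G)$ by degree, maps the heaviest vertex $v_1$ to a minimum-degree vertex $w$ of $H$, and \emph{at the same time} maps $N_H(w)$ onto a pre-selected independent set $B_1$ of $G$-neighbours of $v_1$ (a short lemma shows $|B_1|\ge\delta$ such vertices exist). The next heavy vertices $v_2,\dots,v_k$ are handled iteratively: each $v_i$ is matched to a carefully chosen $H$-vertex and the yet-unmatched $H$-neighbours of that vertex are immediately placed into a random independent set $B_i\subset V(G)$ of $G$-neighbours of $v_i$, with Chernoff bounds guaranteeing enough room. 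Only after every heavy vertex has all of its image's $H$-neighbours pinned down does the argument proceed greedily, finishing with Hall's theorem on a large independent set of unmatched $G$-vertices. The crucial difference is that the paper never lets a heavy $\bar G$-vertex acquire bad incident edges that must later be repaired; your scheme does, and the repair fails.
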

In our proof we make no attempt to minimize the value of $n$ starting from which the theorem holds, although it can be worked out from the proof to be less than
$100000$. Also, the constant $200$ in the bound for the maximum degree is not optimal and can be somewhat improved.
However, more importantly, the following construction shows that it {\em cannot} be improved to less than $\sqrt{2}$, 
and hence the $O(\sqrt n)$ bound on the maximum  degree
is optimal up to a constant factor. Consider the graph $H$ with $n=k(k+6)/2+1$ vertices, consisting of $k$ vertex-disjoint cliques of size  $(n-1)/k$ each,
and an additional vertex connected to some $\delta \le (n-1)/k-1$ vertices of the cliques.
Clearly, $\Delta(H)=(n-1)/k$ and $\delta(H)=\delta$. Observe, however, that $H$ does not have an independent set of size $k+2$.
Hence, if $G$ is the $n$-vertex graph obtained from $K_n$ by removing a $K_{k+2}$, then $H$ is not a spanning subgraph of $G$.
However, $G$ has ${n \choose 2}-{{k+2} \choose 2}$ edges, which is easily checked to be more than ${{n-1} \choose 2}+\delta(H)-1$.

Our next result shows that the conjecture of Glebov et al. is 
false already for $3$-uniform hypergraphs. To simplify the
presentation we describe one example, the same 
proof can provide many others.
\begin{proposition}
\label{p:main}
Let $s$ be a large integer, define $n=1+5s$
and let
$V=V_1 \cup V_2 \cup \cdots \cup V_s \cup \{x\}$ be a set of
$n$ vertices, where each $V_i$ is a set of $5$ vertices, the sets
$V_i$ are pairwise disjoint, and $x$ is an additional vertex. Let
$H$ be 
the $3$ uniform hypergraph on the set of vertices $V$, where each
$V_i$ forms a complete $3$-uniform hypergraph on $5$ vertices, and
$x$ is contained in a unique edge $\{x,u,v\}$ with $u,v \in V_1$.
Let ${\cal L}=\{H(v): ~v \in V\}$  be the set of all
links of $H$. Then $ex(n-1,{\cal L}) =0$
but 
$$
ex(n,H) \geq {{n-2} \choose 3} +  \frac{4}{3} {{n-2} \choose
2}={{n-1} \choose 3}+\frac{1}{3}{{n-2} \choose 2}
>{{n-1} \choose 3}+ex(n-1, {\cal L}).
$$
\end{proposition}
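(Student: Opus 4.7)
The plan is to verify $ex(n-1,{\cal L})=0$ and then to exhibit an $H$-free $3$-uniform hypergraph $G$ on $V$ with at least ${{n-2}\choose 3}+\frac{4}{3}{{n-2}\choose 2}$ edges. The first claim is immediate: since $x$ lies in exactly one edge $\{x,u,v\}$ of $H$, the link $H(x)$ is the graph on $V\setminus\{x\}$ with the single edge $\{u,v\}$, and any graph with at least one edge contains this link as a subgraph. Hence every graph on $n-1$ vertices that avoids all members of ${\cal L}$ is edgeless, so $ex(n-1,{\cal L})=0$. Combining with Pascal's identity ${{n-1}\choose 3}={{n-2}\choose 3}+{{n-2}\choose 2}$ yields both the middle equality and the strict inequality in the displayed chain.

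For the construction, fix two distinguished vertices $a,b\in V$ and write $W=V\setminus\{a,b\}$, so $|W|=n-2=5s-1$. Take two copies $L_a,L_b$ of the Tur\'an graph $T(5s-1,3)$ on the vertex set $W$; a short case check according to $(5s-1)\bmod 3$ gives $|E(T(m,3))|\ge\frac{2}{3}{m\choose 2}$ for every $m$. Let $G$ be the $3$-uniform hypergraph on $V$ whose edges are all triples contained in $W$, all triples of the form $\{a\}\cup e$ with $e\in E(L_a)$, and all triples $\{b\}\cup e$ with $e\in E(L_b)$; in particular, no triple of $G$ contains both $a$ and $b$. Then $|E(G)|={{n-2}\choose 3}+|E(L_a)|+|E(L_b)|\ge{{n-2}\choose 3}+\frac{4}{3}{{n-2}\choose 2}$.

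The heart of the argument is showing that $G$ contains no copy of $H$. Suppose for contradiction that $\phi:V(H)\hookrightarrow V$ is an embedding. The images $\phi(V_1),\ldots,\phi(V_s)$ are $s$ pairwise disjoint $5$-cliques of $G$ (each of the ten $3$-subsets of such an image is an edge of $G$), and their union has size $5s=n-1$, so the unique vertex of $V$ not in the union is $\phi(x)$. Since $\{a,b\}$ has two elements, at least one of $a,b$ must lie in some $\phi(V_i)$. But a $5$-clique of $G$ through $a$ has its other four vertices in $W$---it cannot contain $b$, since no triple of $G$ contains both $a$ and $b$---and the six triples $\{a,w_i,w_j\}$ being edges of $G$ is equivalent to those four vertices inducing a $K_4$ in $L_a$, contradicting the $K_4$-freeness of the Tur\'an graph. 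The symmetric argument rules out $b$, completing the verification.

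The only genuinely creative step is the design of the two-vertex gadget: placing the obstruction to $H$-freeness at two auxiliary vertices whose link graphs are $K_4$-free prevents the required $K_5^{(3)}$-factor from touching either one, while each Tur\'an link still contributes roughly $\frac{2}{3}{{n-2}\choose 2}$ edges, which is exactly what produces the surplus $\frac{1}{3}{{n-2}\choose 2}$ beyond the conjectured bound. All subsequent calculations---the Tur\'an size estimate and the case analysis for the embedding---are routine.
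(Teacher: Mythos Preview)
Your proof is correct and essentially identical to the paper's: both constructions single out two special vertices whose links in the extremal hypergraph are Tur\'an graphs $T(n-2,3)$, with all triples inside the remaining $n-2$ vertices present, and both argue that neither special vertex can lie in a copy of $K_5^{(3)}$ because its link is $K_4$-free. The only cosmetic difference is that the paper uses the same tripartition of $W$ for both links, whereas you allow two possibly different copies $L_a,L_b$, which changes nothing.
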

The reason that $H$ forms a counter-example is that the link of every vertex of $H$ besides one is a complete graph on $4$ vertices, and hence any hypergraph containing two vertices whose links are $3$-colorable cannot contain a spanning copy of $H$. Since a $3$-colorable graph on a set of vertices can contain more than half of all potential edges on this set, $ex(n,H)$ is larger than the number of edges of a complete $3$-uniform hypergraph on $n-1$ vertices.

The rest of this short 
paper is organized as follows. Theorem \ref{t:main} is proved in 
Section 2, Section 3 contains the proof of Proposition \ref{p:main},
and Section 4 contains some concluding remarks.
Throughout the paper we use the standard graph-theoretic terminology 
and notations following \cite{bollobas-1978}.

\section{Proof of the main result}

We say that two graphs $G$ and $H$ of the same order {\em pack}, if
$H$ is a spanning subgraph of the complement of $G$.
Let $H=(W,F)$ be a given graph with $n$ vertices and with $\Delta (H) \le \sqrt{n}/200$.
Let $G=(V,E)$ be any graph with $n$ vertices and $n-\delta-1$ edges, where $\delta=\delta(H)$.
It suffices to prove that $G$ and $H$ pack. Equivalently, we construct a bijection $f:V \rightarrow W$
such that for all $(u,v) \in E$, $(f(u),f(v)) \notin F$. Throughout the proof we assume
that $n$ is larger than some absolute constant.

Before describing $f$, we require some notation.
Let $d(v)$ denote the degree of a vertex $v$ in $G$,
let $N(v)$ denote the set of neighbors of $v$ and, for a subset of vertices $W$, let $N[W]=W \cup (\cup_{w \in W} N(w))$.
Let $V=\{v_1,\ldots,v_n\}$ where $d(v_i) \ge d(v_{i+1})$ for $i=1,\ldots,n-1$.
Let $S_1 \subset V \setminus N[v_1]$ be an independent set of vertices each with degree smaller than $2 \sqrt n$ 
and with maximum possible cardinality under this restriction.
In the next lemma we show that $|S_1| \ge \delta$ and thus let $B_1 \subset S_1$ denote an arbitrary subset with $|B_1|=\delta$.
For $i=2,\ldots,n$, let $S_i \subset V \setminus (N[v_i] \cup N[B_1])$ be an independent set of vertices
but with the additional requirement that each $u \in S_i$ has $d(u) \le 10$. Furthermore, we require that $S_i$ has maximum possible cardinality under these
restrictions.
\begin{lemma}
\label{l:1}
The following bounds hold:
\begin{enumerate}
\item
$d(v_1) \le n-\delta-1$, $d(v_2) \le n/2$, and $d(v_i) < 2n/i$.
\item
$|S_1| \ge \delta$ and $|S_i| \ge n/18$ for $i=2,\ldots,n$.
\end{enumerate}
\end{lemma}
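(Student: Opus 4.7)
The plan is to prove the two parts of the lemma separately, exploiting the edge count $|E(G)|=n-\delta-1$ together with the sorting $d(v_1)\ge d(v_2)\ge\cdots$. For part (1), I would first note that a single vertex is incident to at most $|E(G)|$ edges, giving $d(v_1)\le n-\delta-1$. Next, $v_1$ and $v_2$ are jointly incident to at most $|E(G)|+1$ distinct edges (sharing at most the edge $v_1v_2$), so $d(v_1)+d(v_2)\le n-\delta\le n$, whence $2\,d(v_2)\le n$. Finally, $i\,d(v_i)\le\sum_{j\le i}d(v_j)\le 2|E(G)|<2n$ yields $d(v_i)<2n/i$.

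For part (2), I would start with $|S_1|\ge\delta$. Let $U=V\setminus N[v_1]$ and set $t=|E(G)|-d(v_1)$, so that $|U|=\delta+t$, the edges of $G$ inside $U$ satisfy $e(G[U])\le t$, and the $G$-degrees of $U$-vertices sum to at most $2t$. It follows that at most $t/\sqrt n$ vertices of $U$ have $G$-degree $\ge 2\sqrt n$, hence the low-degree part $U'\subseteq U$ has $|U'|\ge\delta+t(1-1/\sqrt n)$. I would then split into two cases on $t$. When $t\le 8\delta$, using $\delta\le\sqrt n/200$ one gets $t/\sqrt n<1$, so $U'=U$ and the elementary bound $\alpha(G[U])\ge|U|-e(G[U])\ge\delta$ suffices. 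When $t>8\delta$, we have $|U'|\ge\delta+t/2$ (valid for $n\ge 4$), and Caro--Wei on $G[U']$ gives $\alpha\ge|U'|^2/(2t+|U'|)\ge\delta$ via a short quadratic check.

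For $|S_i|\ge n/18$ with $i\ge 2$, part (1) yields $|N[v_i]|\le n/2+1$, while $|N[B_1]|\le|B_1|(1+2\sqrt n)\le n/100+\sqrt n/200$ since $|B_1|=\delta\le\sqrt n/200$ and each vertex of $B_1$ has $G$-degree below $2\sqrt n$. Thus $|W_i|:=|V\setminus(N[v_i]\cup N[B_1])|\ge n/2-O(\sqrt n)$. Since $|E(G)|<n$, fewer than $2n/11$ vertices have $G$-degree at least $11$, so after restricting $W_i$ to vertices with $d_G\le 10$ I obtain a set $W_i^L$ with $|W_i^L|\ge 7n/22-O(\sqrt n)$. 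The induced subgraph $G[W_i^L]$ has maximum degree at most $10$ and fewer than $n$ edges; applying Caro--Wei on this subgraph, combined with the explicit bounds on $|W_i^L|$ and $e(G[W_i^L])$, produces an independent set of size at least $n/18$ once $n$ is large.

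The principal obstacle is part (2) for $S_1$, in the regime where $d(v_1)$ approaches its maximum $n-\delta-1$: there $|U|$ collapses to $\delta$, leaving no slack to discard high-degree vertices by simply removing them. The case split on $t$ versus $8\delta$ is chosen precisely so that in that tight regime the combinatorial bound $\alpha(G[U])\ge|U|-e(G[U])$ already delivers $\delta$ (the hypothesis $\delta\le\sqrt n/200$ being exactly what forces $U$ to contain no vertex of degree $\ge 2\sqrt n$), while in the complementary regime $t>8\delta$ the set $|U'|$ is large enough for Caro--Wei to take over. The bound on $|S_i|$ is comparatively softer; the only care needed is bookkeeping the small additive losses coming from $|N[v_i]|$, $|N[B_1]|$, and the high-degree removal.
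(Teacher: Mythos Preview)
Your argument for part (1) and for $|S_1|\ge\delta$ is correct and essentially matches the paper: the paper also splits on whether $n-d(v_1)-1$ is large or small compared to a multiple of $\delta$, and in the small case uses exactly your observation that every $u\in U$ then has $d_G(u)\le t<2\sqrt n$ together with $\alpha(G[U])\ge|U|-e(G[U])\ge\delta$. Your Caro--Wei treatment of the large-$t$ case is a clean alternative to the paper's ``independent set first, then filter'' argument there.

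There is, however, a genuine quantitative gap in your treatment of $|S_i|\ge n/18$. First, a bookkeeping slip: $|N[B_1]|\le\delta(1+2\sqrt n)\le n/100+\sqrt n/200$ is of order $n$, not $O(\sqrt n)$, so $|W_i|\ge n/2-n/100-O(\sqrt n)$, not $n/2-O(\sqrt n)$. More importantly, even granting your bound $|W_i^L|\ge 7n/22$, Caro--Wei with $e(G[W_i^L])<n$ only yields
\[
\alpha\;\ge\;\frac{|W_i^L|^2}{2e+|W_i^L|}\;\ge\;\frac{(7n/22)^2}{2n+7n/22}\;=\;\frac{49n}{1122}\;\approx\;0.0437\,n\;<\;\frac{n}{18},
\]
and the max-degree bound $\alpha\ge|W_i^L|/11=7n/242$ is worse still. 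So the numbers as you have sketched them do not close.

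The paper avoids this by reversing your order of operations. It first takes an independent set $I\subseteq V\setminus N[v_i]$ of size at least $(n/2-1)/3\ge n/6-1$ (using only that this induced subgraph has no more edges than vertices), and \emph{then} filters by degree: since $I$ is independent, $\sum_{u\in I}d_G(u)\le|E|<n$, so fewer than $n/11$ vertices of $I$ have degree $\ge 11$, leaving at least $n/6-1-n/11>n/15$ low-degree independent vertices; finally removing $N[B_1]$ costs at most $n/100$, giving $\ge n/15-n/100>n/18$. The point is that filtering \emph{after} finding the independent set lets you use the much stronger bound ``an independent set meets each edge at most once'' instead of the global degree-sum bound, and that is exactly what your ordering loses. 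Your route can be rescued by tracking the number $h$ of high-degree vertices and using $e(G[W_i^L])<n-11h/2$ together with $|W_i^L|\ge 0.49n-h$, but then the computation is no longer the soft bookkeeping you describe.
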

\begin{proof}
Trivially, $d(v_1) = \Delta(G) \le |E|=n-\delta-1$.
Since $d(v_1)+d(v_2) \le |E|+1$, we have that $d(v_2) \le (|E|+1)/2 \le n/2$.
Since $\sum_{i=1}^n d(v_i) =2n-2\delta-2 < 2n$, we have that $d(v_i) < 2n/i$.

The subgraph of $G$ consisting of the non-neighbors of $v_1$ 
has $n-d(v_1)-1$ vertices and
at most $n-\delta-1-d(v_1)$ edges and therefore has an independent set of size at least $(n-d(v_1)-1)/3$.
If, say, $n-d(v_1)-1 \geq 6 \delta$, at least  $(n-d(v_1)-1)/6 \geq \delta$
of its vertices have degree at most $6 < 2 \sqrt n$. Thus,
in this case $|S_1| \geq \delta$.
Otherwise, any non-neighbor of $v_1$ has degree at most $6 \delta
< 2 \sqrt n$, and as the subgraph induced on them has
$n-d(v_1)-1$ vertices and at most $n-\delta-1-d(v_1)$ edges
it has at least $\delta$ components. Selecting one vertex from each
component shows that $|S_1| \ge \delta$ in this case as well.

By a similar reasoning, the subgraph of $G$ consisting of 
the non-neighbors of $v_i$ does not have more edges than vertices,
and hence has an independent set of size at least $1/3$ of its cardinality. As for $i \ge 2$, its cardinality is at least
$n-d(v_i)-1 \ge n-d(v_2)-1 \ge n-n/2-1=n/2-1$,
it has an independent set of size at least $n/6-1$.
Let $S'_i$ denote the subset of this independent set consisting only of vertices whose degrees do not exceed $10$.
We claim that $|S'_i| \ge n/15$. Indeed, otherwise there are
at least $(n/6-1)-n/15=n/10-1$ independent vertices of $G$ with degree at least $11$, which contradicts the fact that $G$ has less than $n$ edges.
Next, we remove from $S'_i$ any vertex which belongs to $N[B_1]$. Recall that $|B_1|=\delta$ and that each vertex of $B_1$ has degree at most
$2\sqrt{n}$. Hence, $|N[B_1]| \le 2\sqrt{n}\delta \le 2\sqrt{n}\Delta \le n/100$.  It follows that $S_i= S'_i \setminus N[B_1] \ge n/15-n/100 \ge n/18$
as required.
\end{proof}

For $i=2,\ldots,n$, let $B_i$ be a random subset of $S_i$,
where each vertex of $S_i$ is independently selected to $B_i$ with probability $n^{-1/2}$. Let
\begin{eqnarray*}
C_i & = & (\cup_{j=2}^{i-1} B_j) \cap N(v_i)\;,\\
D_i & = & B_i \setminus (\cup_{j=2}^{i-1} N[B_j])\;.
\end{eqnarray*}
Clearly, $|C_i|$ could be as large as $d(v_i)$, which, in turn could be as large as $2n/i$.
On the other hand, $|D_i|$ could be as small as zero.
We will need, however, to make sure that $|C_i|$ is considerably smaller than $2n/i$, at least for relatively small $i$,
and that $|D_i|$ is rather large, at least for relatively small $i$. This is made precise in the following lemma.
\begin{lemma}
\label{l:2}
For $n$ sufficiently large, all of the following hold with positive probability:
\begin{enumerate}
\item
$|C_i| \le 4\sqrt{n}$ for $i=2,\ldots,n$,
\item
$|D_i| \ge \sqrt{n}/50$ for $i=2,\ldots,\lceil \sqrt{n}/10 \rceil$.
\end{enumerate}
\end{lemma}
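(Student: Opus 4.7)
I would prove the two claims separately and combine them by a union bound; each reduces to a first-moment computation plus a concentration estimate.

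For Part 1, split on $i$. When $i \ge 2\sqrt{n}$, Lemma~\ref{l:1}(1) yields $d(v_i)\le 2n/i\le \sqrt{n}$, and since $C_i\subseteq N(v_i)$ this gives $|C_i|\le \sqrt{n}<4\sqrt{n}$ deterministically. When $2\le i<2\sqrt{n}$, I bound $|C_i|\le T_i:=\sum_{j=2}^{i-1}|B_j\cap N(v_i)|$, a sum of independent $\mathrm{Bernoulli}(n^{-1/2})$ indicators (one per pair $(u,j)$ with $u\in S_j\cap N(v_i)$). Its expectation is at most $(i-2)d(v_i)n^{-1/2}\le 2\sqrt{n}$ by $d(v_i)\le 2n/i$, so a Chernoff bound gives $\Pr[T_i>4\sqrt{n}]\le e^{-\Omega(\sqrt{n})}$, and a union bound over the $O(\sqrt{n})$ values of $i$ shows Part~1 holds with probability $1-o(1)$.

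For Part 2, the heart of the matter is a first-moment calculation. For each $u \in S_i$ the event $\{u\in D_i\}$ factors as $\{u\in B_i\}\cap\bigcap_{j<i}\{u\notin N[B_j]\}$; these are mutually independent since they depend on disjoint families of coin flips, giving
\[
\Pr[u\in D_i] \;=\; n^{-1/2}\prod_{j=2}^{i-1}(1-n^{-1/2})^{|N[u]\cap S_j|}.
\]
The crucial estimate is $|N[u]\cap S_j|\le 10$: since $d(u)\le 10$ for $u\in S_i$ with $i\ge 2$, either $u\in S_j$ (and independence of $S_j$ forces $|N[u]\cap S_j|=1$) or $u\notin S_j$ (so $|N[u]\cap S_j|=|N(u)\cap S_j|\le d(u)\le 10$). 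For $i\le\lceil\sqrt{n}/10\rceil$ this gives $\sum_{j<i}|N[u]\cap S_j|\le 10(i-2)\le \sqrt{n}$, hence $\Pr[u\in D_i]\ge n^{-1/2}(1-n^{-1/2})^{\sqrt{n}}=e^{-1}n^{-1/2}(1+o(1))$. Combined with $|S_i|\ge n/18$ from Lemma~\ref{l:1}(2) this yields $\mathrm{E}[|D_i|]\ge \sqrt{n}/(18e)(1+o(1))$, which strictly exceeds $\sqrt{n}/50$ because $18e<50$.

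Concentration is the last and most delicate step. Writing $|D_i|=\sum_{u\in S_i}Y_u$ with $Y_u=\mathbf{1}[u\in D_i]$, the self-variances sum to $\mathrm{E}[|D_i|]=O(\sqrt{n})$, while $\mathrm{Cov}(Y_u,Y_{u'})$ vanishes unless $N[u]$ and $N[u']$ share a vertex inside some $S_j$ with $j<i$. A bookkeeping argument using $|N[w]|\le 11$ for $w\in S_j$ bounds the sum of covariances by $O(1)$, so $\mathrm{Var}(|D_i|)=O(\sqrt{n})$. Chebyshev with the $\Omega(\sqrt{n})$ mean-gap then gives the per-$i$ bound; if needed, a higher-moment estimate or the slightly better $|S_i|\ge 17n/300$ implicit in the proof of Lemma~\ref{l:1} tightens the failure probability sufficiently for a union bound over $O(\sqrt{n})$ values of $i$. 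Intersecting with the Part~1 event finishes the proof. The main obstacle is precisely this tight concentration: the gap between $\mathrm{E}[|D_i|]$ and $\sqrt{n}/50$ is only a small constant multiple of $\sqrt{n}$, so plain Chebyshev with the naive variance bound is borderline and the argument must be calibrated carefully.
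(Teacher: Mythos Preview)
Your Part~1 is essentially the paper's argument. The paper splits according to whether $d(v_i)\ge 4\sqrt n$ rather than $i\ge 2\sqrt n$, and writes $|C_i|=\sum_{u\in N(v_i)}\mathbf 1[u\in C_i]$ as a sum of $d(v_i)$ independent (heterogeneous) indicators with total mean at most $2\sqrt n$, then applies Chernoff; your device of passing to the larger sum $T_i$ over pairs $(u,j)$ is a harmless variant of the same idea.

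For Part~2 the routes diverge. The paper simply asserts that the indicators $\mathbf 1[u\in D_i]$, $u\in S_i$, are \emph{independent} and applies Chernoff directly, giving an $\exp(-\Omega(\sqrt n))$ tail per $i$. You are right that literal independence can fail (two vertices of $S_i$ may share a low-degree neighbour lying in some $S_j$), and you replace Chernoff by a second-moment argument. The difficulty is that your replacement does not close. With $\mathrm E[|D_i|]\approx\sqrt n/(18e)$ the gap to $\sqrt n/50$ is only about $4\times 10^{-4}\sqrt n$, while $\mathrm{Var}(|D_i|)\approx\mathrm E[|D_i|]\approx\sqrt n/49$; Chebyshev therefore gives a per-$i$ failure probability of order $10^{5}/\sqrt n$, and the union bound over $\sqrt n/10$ indices yields a constant in the thousands, not something below~$1$. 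Sharpening $|S_i|$ from $n/18$ to $17n/300$ barely moves this, and the higher-moment fix is only asserted, not supplied; so as written there is a genuine gap at the concentration step.

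A clean repair, which also justifies the paper's use of a Chernoff-type bound, is to note that the dependency graph among the indicators has maximum degree at most~$100$ (each $u\in S_i$ has at most $10$ neighbours, and any such neighbour that lies in some $S_j$ with $j\ge 2$ has at most $10$ further neighbours in $S_i$), so a Chernoff bound for bounded-degree dependency applies with only a constant loss in the exponent. Equivalently, condition on $B_2,\ldots,B_{i-1}$: then $|D_i|$ is genuinely binomial with parameter $|S_i\setminus\bigcup_{j<i}N[B_j]|$, and the latter is concentrated near $|S_i|/e$ by McDiarmid, since flipping a single coin $\mathbf 1[w\in B_j]$ changes it by at most $|N[w]\cap S_i|\le 11$.
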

\begin{proof}
We prove that each of the two sets of bounds hold with probability higher than $1/2$, and hence both hold with positive probability.

For the first part of the lemma, we only need to consider vertices  $v_i$ with $d(v_i) \ge 4\sqrt{n}$,
as for other vertices the claim clearly holds since $|C_i| \le |N(v_i)|=d(v_i)$.
So, fix some vertex $v_i$ with $d(v_i) \ge 4\sqrt{n}$. We prove that the probability that $|C_i| > 4\sqrt{n}$
is smaller than $1/(2n)$ and hence, by the union bound, this part of the lemma holds with probability greater than $1/2$.
For $u \in N(v_i)$, the probability that $u \in B_j$ is at most $n^{-1/2}$ (it is either $n^{-1/2}$ if $u \in S_j$ or $0$ if $u \notin S_j$).
As the membership of $u$ in $C_i$ is only determined by its membership in $B_2 \cup \cdots \cup B_{i-1}$, we have that
$\Pr[u \in C_i] \le (i-2)n^{-1/2}$. By Lemma \ref{l:1}, $i < 2n/d(v_i)$. Hence,
$$
\Pr[u \in C_i] \le \frac{2\sqrt{n}}{d(v_i)}\;.
$$
Observe that $|C_i|$ is a sum of $d(v_i)$ independent indicator random variables, one for each $u \in N(v_i)$, each variable
having success probability at most $2\sqrt{n}/d(v_i)$. The expectation of $|C_i|$ is therefore at most $2\sqrt{n}$
and by a large deviation inequality of Chernoff (see \cite{AS-2000}, Theorem A.1.11), the probability of $|C_i|$ being larger than
$4\sqrt{n}$ is exponentially small in $\sqrt{n}$. In particular, for $n$ sufficiently large, it is smaller than $1/(2n)$.

For the second part of the lemma, observe that for $u \in S_i$, the probability that $u \in B_i$ is $n^{-1/2}$.
On the other hand, for any $j \ge 2$, the probability that $u \notin N[B_j]$ is at least $1-10n^{-1/2}$.
Indeed, this is true because either $u \in S_j$ in which case $u$ is selected for $B_j$ (and therefore for $N[B_j]$) with probability at most $1/\sqrt{n}$.
Else, since $u \in S_i$ we already know that $d(u) \le 10$. Hence, $u$ has at most $10$ neighbors in $S_j$ so $u \in N[B_j]$ with probability at most
$10/\sqrt{n}$.
Hence, as long as $i \le \lceil \sqrt{n}/10 \rceil$,
$$
\Pr[u \in D_i] \ge n^{-1/2}(1-10n^{-1/2})^{i-2} \ge \frac{1}{e\sqrt{n}}\,.
$$
Observe that $|D_i|$ is a sum of $|S_i|$ independent indicator random variables, each having success probability at least $\frac{1}{e\sqrt{n}}$.
By Lemma \ref{l:1}, $|S_i| \ge n/18$, and therefore the expectation of $|D_i|$ is at least $\sqrt{n}/(18e) > \sqrt{n}/49$.
By a large deviation inequality of Chernoff (see \cite{AS-2000}, Theorem A.1.13), the probability that $|D_i|$ falls below
say, $\sqrt{n}/50$ is exponentially small in $\sqrt{n}$. In particular, for $n$ sufficiently large, it is smaller than $1/(2n)$, so by the union bound,
the second part of the lemma holds with probability greater than $1/2$ for all $i=2,\ldots,\lceil \sqrt{n} \rceil$.
\end{proof}

\noindent
{\bf Completing the proof of Theorem \ref{t:main}:}\,
By Lemma \ref{l:2} we may fix independent sets $B_2,\ldots,B_n$ satisfying all the conditions of Lemma \ref{l:2} with respect to the
cardinalities of the sets $C_i$ and $D_i$.
The construction of the bijection $f$ is done in four stages.
At each point of the construction, some vertices of $V$ are {\em matched} to some vertices of $W$ while the other vertices
of $V$ and $W$ are yet unmatched. Initially, all vertices are unmatched.
We always maintain the {\em packing} property:
for any two matched vertices $u,v \in V$ with $(u,v) \in E$, their corresponding matches $f(u)$ and $f(v)$ satisfy $(f(u),f(v)) \notin F$.
Thus, once all vertices are matched, $f$ is a packing of $G$ and $H$.

\noindent{\em Stage 1.} We match $v_1$ (which, by definition, is a vertex with maximum degree in $G$) with a vertex $w \in W$
having {\em minimum} degree in $H$, and set $f(v_1)=w$. Let $N(w)$ be the set of neighbors of $w$ in $H$.
As $|N(w)|=\delta$ and since, by Lemma \ref{l:1}, $|S_1| \ge \delta$, we may match $B_1$ with $N(w)$ (recall that $B_1 \subset S_1$ is a set of size $\delta$).
Observe that the packing property is maintained since $S_1$ (and therefore $B_1$) is an independent set of non-neighbors of $v_1$.
Note that after stage 1, precisely $\delta+1$ pairs are matched.

\noindent{\em Stage 2.}
Let $k$ be the largest index such that $d(v_k) \ge 20\sqrt{n}$. Observe that by Lemma \ref{l:1}, $0 \le k \le \sqrt{n}/10$.
This stage is done repeatedly for $i=2,\ldots,k$, where at iteration $i$ we match $v_i$ and some subset of vertices of $B_i$
with a corresponding set of vertices of $W$.
Throughout this stage we maintain the following invariants:
\begin{enumerate}
\item
After iteration $i$ which matches $v_i$ with some vertex $f(v_i)$, we also make sure that all neighbors of $f(v_i)$ in $H$ are matched to vertices of $B_i$.
\item
After iteration $i$, any matched vertex of $V$ other than $v_1,\ldots,v_i$ is contained in $\cup_{j=1}^{i} B_j$.
\item
The overall number of matched vertices after iteration $i$ is at most $i(\Delta(H)+1)$.
\end{enumerate}
Observe that Stage 1 guarantees that these invariants hold at the beginning of Stage 2. Indeed, at the end of Stage 1,
all the invariants hold for $i=1$. In particular, recall that precisely $\delta+1$ vertices have been matched at Stage 1.

So, consider the $i$'th iteration of Stage 2, where $v_i$ is 
some yet unmatched vertex with $d(v_i) \ge 20\sqrt{n}$. (Note that
$v_i$ is indeed yet unmatched as the vertices of each $B_i$,
including $B_1$, have degree smaller than $2 \sqrt n$.)
We partition $N(v_i)$ into three parts $N(v_i)=X \cup Y \cup Z$, where $X$ are the matched neighbors $v_j$ with $j < i$,
$Y$ are the other matched neighbors, and $Z$ being the yet unmatched neighbors.
Clearly $|X| < i \le k \le \sqrt{n}/10$. On the other hand, by the second invariant,
$Y \subset \cup_{j=1}^{i-1} B_j$. Thus, $Y \subset C_i \cup B_1$.
From the first property in Lemma \ref{l:2}, together with $|B_1|=\delta$, we obtain that $|Y| \le \delta + 4\sqrt{n} < 5\sqrt{n}$,
and therefore $|X \cup Y| < 6\sqrt{n}$.

Consider the set $T$ of $|X \cup Y|$ matches of $X \cup Y$ in $H$.
Each vertex of $T$ has at most $\Delta(H)$ neighbors in $H$, so altogether, there is a set $Q$ with
$$
|Q| \ge n-|T|\Delta(H) = n-|X \cup Y|\Delta(H) \ge n-6\sqrt{n} \cdot \sqrt{n}/200 = 97n/100
$$
vertices of $H$ that
are non-neighbors of all vertices of $T$. In order to maintain the packing property, we would like to match $v_i$ with some vertex of $Q$.
In order to do this, we must make sure that $Q$ contains at least one vertex that is yet unmatched.
This, in turn, is true because of the third invariant, as the overall number of matched vertices at this point is only
$(i-1)(\Delta(H)+1) < k(\sqrt{n}/200+1) \le n/2000+\sqrt{n}/10$. So, the number of unmatched vertices is much larger than $n-|Q|$ and hence intersects
$Q$. Let, therefore, $f(v_i)$ be one such vertex.

Let $R$ be the set of neighbors of $f(v_i)$ in $H$ that are still not matched. Clearly, $|R| \le \Delta(H) \le \sqrt{n}/200$.
In order to maintain the first invariant, we must match some unmatched independent set of non-neighbors of $v_i$ with $R$.
A valid choice of such vertices which maintains the packing property is obtained by taking $|R|$ vertices of $D_i= B_i \setminus (\cup_{j=2}^{i-1} N[B_j])$,
and this will also show that the second invariant is maintained (recall also that $S_i$, and hence $B_i$, and hence $D_i$, do not contain vertices of $N[B_1]$).
We can, indeed, pick such a subset, as 
the second property in Lemma \ref{l:2} guarantees that
$|D_i| \ge \sqrt{n}/50 \ge |R|$.
Finally, notice that the third invariant is maintained as iteration $i$ only introduced $|R|+1 \le \Delta(H)+1$ newly matched vertices.

\noindent{\em Stage 3.}
At this point we are guaranteed that the unmatched vertices of $G$ have degree less than $20\sqrt{n}$.
Furthermore, by the third invariant of Stage 2,
the total number of unmatched vertices of $G$ is at least $n-(\sqrt{n}/10)(\Delta(H)+1)  > 19n/20$.
As the unmatched vertices of $G$ induce a subgraph with at least $19n/20$ vertices
and less than $n$ edges, they contain an independent set of size at least $n/4$. Let, therefore, $J$ denote a maximum independent set of
unmatched vertices of $G$ and let $K$ be the remaining unmatched vertices of $G$. We have $|J| \ge n/4$.

The third stage consists of matching the vertices of $K$ one by one. Suppose $v \in K$  is still unmatched. As $d(v) \le 20\sqrt{n}$,
the set $X$ of matched neighbors of $v$ satisfies $|X| \le 20\sqrt{n}$. A similar argument to the one in Stage 2 now follows.
Consider the set $T$ of $|X|$ matches of $X$ in $H$.
Each vertex of $T$ has at most $\Delta(H)$ neighbors in $H$, so altogether, there is a set $Q$ of at least
$n-|X|\Delta \ge n- n/10=9n/10$ vertices of $H$ that
are non-neighbors of all vertices of $T$. In order to maintain the packing property, we would like to match $v$ to some vertex of $Q$.
In order to do this, we must make sure that $Q$ contains at least one vertex that is yet unmatched.
This, in turn, is true because the overall number of matched vertices at this point is at most $n-|J| \le 3n/4$.
So, there is a yet unmatched vertex of $Q$.

\noindent{\em Stage 4.}
It remains to match the vertices of $J$ to the remaining unmatched vertices of $H$, denoted by $Q$.
Construct a bipartite graph $P$ whose sides are $J$ and $Q$. Recall that $|J|=|Q| \ge n/4$.
We place an edge from $v \in J$ to $q \in Q$ if matching $v$ to $q$ is {\em allowed}.
By this we mean  that mapping $v$ to $q$ will not violate the
packing property.
At the beginning of Stage 4, as in Stage 3, for each $v \in J$, there are at least $9n/10$ vertices of $H$
that are non-neighbors of all vertices that are matches of matched neighbors of $v$. So, the degree of $v$ in $P$ is at least $9n/10-(n-|J|) > |J|/2$.

On the other hand, consider some $q \in Q$. Let $T$ be the set of matched neighbors of $q$ in $H$, and let $X$ be their matches in $G$.
Notice that $q$ is not allowed to be matched to an unmatched neighbor of some $x \in X$. So, consider some $x \in X$. Clearly, if $d(x) \ge 20\sqrt{n}$ then
all of the neighbors of $x$ in $G$ are already guaranteed to be matched by the first invariant in Stage 2.
So, let $X' \subset X$ be the vertices with degrees smaller than $20\sqrt{n}$.
Hence, $q$ is not allowed to be matched to at most $|X'|20\sqrt{n}$ vertices of $J$, but
$|X'| \le |X|=|T| \le \Delta(H)$ so $q$ cannot be matched to at most $n/10$ vertices of $J$, which is smaller than $|J|/2$.
Thus, the degree of $q$ in $P$ is also larger than $|J|/2$. It now follows by Hall's Theorem that $P$ has a perfect matching, completing the matching $f$.
\qed

\section{A counter-example for hypergraphs}

In this short section we prove Proposition \ref{p:main}. This 
provides a counter-example to the conjecture of \cite{GPW-2012}.
Indeed, the proposition
gives a $3$-graph $H$ on $n$ vertices whose set of links ${\cal L}$
contains a graph with one edge and hence satisfies
$ex(n-1,{\cal L})=0$. On the other hand,  by the proposition,
$ex(n,H) \geq 
{{n-1} \choose 3}+\frac{1}{3}{{n-2} \choose 2} >
{{n-1} \choose 3}.$ 
\bigskip

\noindent
{\it Proof of Proposition \ref{p:main}.}\,
Let $T$ be the following $3$-uniform hypergraph on the set
of vertices $U \cup \{x,y\}$, where $|U|=n-2$ and $x,y \not \in U$.
Let $U=U_1 \cup U_2 \cup U_3$ be a partition of $U$ into $3$
nearly equal disjoint sets (that is, each $U_i$ is of cardinality
either $\lfloor (n-2)/3 \rfloor$ or $\lceil (n-2)/3 \rceil$.)
The edges of $T$ consist of all $3$-subsets of $U$, as well as
all edges $\{x,u_i,u_j\}$ and $\{y,u_i,u_j\}$ with
$1 \leq i <j \leq 3$, $u_i \in U_i$ and $u_j \in U_j$. 

Note that $T$ does not contain a copy of $H$, as it has $2$
vertices ($x$ and $y$) whose links are $3$-colorable, and thus none
of them lies in a copy of a complete $3$-graph on $5$ vertices.
The desired result follows, as $T$ has  at least
${{n-2} \choose 3} +  \frac{4}{3} {{n-2} \choose 2}$ edges.
\hfill $\qed$
\bigskip

\noindent
{\it Remark.}\, The above reasoning can clearly provide many
additional counter-examples. Indeed, any bounded degree 
$3$-graph in which the links of all vertices but one
are of chromatic number at least $4$, and the last link is
of chromatic number $2$, is a counter-example. There are additional
variants that provide more examples, but for all the ones we know,
the reason that the extremal number is large is local, that is,
one can construct a nearly complete hypergraph $T$ containing 
a set of some $f$ vertices that cannot serve as any set
of $f$ vertices of $H$, where $f$ is bounded by a function of
$\Delta(H)$. It will be interesting to decide if for any
bounded degree hypergraph $H$ on $n$ vertices with no isolated
ones,
the correct value of $ex(n,H)$ can be determined by ensuring
that there is no such local obstruction.

\section{Concluding remarks}

We established a far-reaching generalization of Ore's Theorem,
supplying the precise value of the extremal number $ex(n,H)$ for
a large number of graphs $H$ on $n$ vertices. Note that the result
shows that any graph on $n$ vertices and more than
${{n-1} \choose 2} + \delta-1$ edges is {\em universal} for the
class of all graphs with $n$ vertices, minimum degree at most
$\delta$ and maximum degree at most  $\sqrt n/200$, that is, it
contains all of them.

The extremal graph in Ore's Theorem is unique for all $n > 5$,
that is, when $H=C_n$ is the cycle of  length $n>5$, the only graph
on $n$ vertices with $ex(n,H)={{n-1} \choose 2}+1$ edges containing
no copy of $H$ is $K_n-S_{1,n-2}$, that is,
the graph obtained from $K_n$ by deleting
a star with $n-2$ edges. This is {\em not} 
the case in our more general Theorem
\ref{t:main}. Indeed, let $H$ be, for example,
any bounded degree graph on $n$ vertices 
in which all vertices but one have degree at least $3$, and one
vertex, call it $v$, is of degree $2$ and its two neighbors $x$ and
$y$ in $H$
are adjacent. By Theorem \ref{t:main}, $~ex(n,H)={{n-1} \choose
2}+1$, and one graph on $n$ vertices and $ex(n,H)$ edges
containing no copy of
$H$ is $K_n-S_{1,n-2}$. There is, however, another extremal graph-
the graph $T$ obtained from $K_n$ by deleting a vertex-disjoint union
of a star with $n-3$ edges and a single edge. Indeed, assuming $H$
is a subgraph of such a graph, then the apex of the deleted star
must play the role of $v$, but then its only two neighbors in
$T$ must play the roles of $x$ and $y$, which is impossible, as 
$x$ and $y$ are not adjacent in $T$.

As mentioned in the remark following the proof of
Proposition \ref{p:main}, all our counter-examples to the 
conjecture of \cite{GPW-2012}
regarding the extremal numbers $ex(n,H)$ for hypergraphs $H$
are based on a local obstruction. It seems interesting to decide
if these are all the possible examples. One way to formalize
this question is the following.
\bigskip

\noindent
{\it Question.}\, Is it true that for any $k \geq 2$ and any
$\Delta>0$ there is
an $f=f(\Delta)$ so that for any $k$-uniform hypergraph $H$ on
$n$ vertices with no isolated vertices and with maximum degree
at most $\Delta$, any $k$-uniform hypergraph on $n$ vertices which contains
no copy of $H$ and has the maximum possible number, $ex(n,H)$, of
edges, must contain a complete $k$-hypergraph on at least
$n-f$ vertices?
\bigskip

\noindent
Our proof of Theorem \ref{t:main} implies the validity of
this assertion for graphs (that is, for the case $k=2$).
Indeed, if the maximum degree $d(v_1)$ of the graph $G$
of missing edges considered in the proof satisfies, say,
$n-d(v_1)-1 \geq 6 \delta$, then as shown in the proof,
stage 1 can be completed. In all other stages we can allow
more missing edges and the proof can still be carried out 
with no  change. On the other hand, if $n-d(v_1)-1 < 6 \delta$,
then the graph whose only missing edges are those of $G$ contains
a complete graph on more than $n-12 \delta$ vertices. Therefore,
despite the fact that the extremal graph in the theorem is not
unique, in general, as mentioned above, all extremal examples
must contain a clique on nearly all the vertices, and thus satisfy
the statement in the previous question.

An equivalent formulation of Theorem \ref{t:main} 
is in terms of graph
packing. We have shown that for large $n$,
any two graphs on $n$ vertices,
where one graph has minimum degree $\delta>0$ and maximum
degree at most $\sqrt n/200$ and the other has at most
$n-1-\delta$ edges, pack. There is an extensive literature
dealing with sufficient conditions ensuring that two graphs
$G$ and $H$ on $n$ vertices pack. The main open conjecture on the
subject is the one of Bollob\'as and Eldridge 
\cite{BE-78} asserting that if
$(\Delta(G)+1)(\Delta(H)+1) \leq n+1$ then $G$ and $H$ pack.
Sauer and Spencer (\cite{SS-78}, see also Catlin's paper \cite{Ca-74}) 
proved
that this is the case if 
$2 \Delta(G) \Delta(H) <n$. For a survey of packing results including
many extensions, variants and relevant references, see
\cite{KKY-09}. 
\vspace{0.5cm}

\noindent
{\bf Acknowledgment}\, We would like to thank Roman Glebov
for helpful comments. We also thank Andrew McConvey for pointing out an inaccurate point in an earlier published version of the paper.

\bibliographystyle{plain}

\bibliography{oregen}

\end{document}